\documentclass[11pt]{amsart}
\usepackage{latexsym}
\usepackage{amsfonts}
\usepackage{amsmath,amssymb}
\usepackage{color}

\setlength{\textwidth}{150mm}
\setlength{\textheight}{230mm}
\setlength{\oddsidemargin}{.25in}
\setlength{\evensidemargin}{.25in}
\setlength{\topmargin}{-0.2cm}
\setlength{\parskip}{.05in}
\setlength{\hoffset}{-0.4cm}
\setlength{\headheight}{12pt}
\setlength{\headsep}{25pt}

\newcommand{\field}[1]{\mathbb{#1}}
\newcommand{\C}{\field{C}}

\newcommand{\ignore}[1]{}

\newtheoremstyle{s2}{9pt}{9pt}{\rm}{}{\bf}{.}{0.5em}{}
\theoremstyle{s2}
\newtheorem{definition}{Definition}[section]

\newtheorem{remark}[definition]{Remark}

\newtheoremstyle{s1}{9pt}{9pt}{\it}{}{\bf}{.}{0.5em}{}
\theoremstyle{s1}
\newtheorem{lemma}[definition]{Lemma}
\newtheorem{theorem}[definition]{Theorem}

\newtheoremstyle{changednumber}{}{}{\itshape}{}{\bfseries}{.}{.5em}{#1 \thmnote{#3}}
\theoremstyle{changednumber}
\newtheorem*{changednumbertheorem}{Theorem}

\font\tenmsy=msbm10

\def\Bbb#1{\hbox{\tenmsy#1}}

\DeclareMathOperator{\rank}{rank}

\setcounter{section}{0}

\title[Finite $\mathcal{A}$-determinacy of generic homogeneous map germs in $\mathbb{C}^3$]{Finite $\mathcal{A}$-determinacy of generic homogeneous map germs in $\mathbb{C}^3$} \makeatletter

\@addtoreset{equation}{section}
\makeatother
\author{M. \ Farnik \& Z. Jelonek \& M.A.S. Ruas}
\address[M. Farnik]{Jagiellonian University\\
Faculty of Mathematics and Computer Science\\
{\L}ojasiewi\-cza~6, 30-348 Krak\'ow, Poland}
\email{michal.farnik@gmail.com}

\address[Z. Jelonek]{Instytut Matematyczny\\
Polska Akademia Nauk\\
\'Sniadeckich 8, 00-656 Warszawa, Poland}
\email{najelone@cyf-kr.edu.pl}

\address[M.A.S. Ruas]{Departamento de Matem\'atica,
ICMC-USP, Caixa Postal 668, 13560-970 S\~ao Carlos, S.P., Brasil}
\email{maasruas@icmc.usp.br}

\keywords{finite $\mathcal{A}$-determinacy, homogeneous map germs, generic map germs}

\subjclass[2010]{14 R 99, 32 A 10}
\thanks{The  authors are partially supported by the grant of Narodowe Centrum Nauki, grant number 2015/17/B/ST1/02637, additionally the third author is partially supported by the FAPESP grant 2014/00304-2}


\begin{document}

\begin{abstract}
Denote by $H(d_1,d_2,d_3)$ the set of all homogeneous polynomial mappings $F=(f_1,f_2,f_3): \C^3\to\C^3$, such that $\deg f_i=d_i$. 
We show that  if  $\gcd(d_i,d_j)\leq 2$ for $1\leq i<j\leq 3$ and $\gcd(d_1,d_2,d_3)=1$, then there is a non-empty Zariski open subset $U\subset H(d_1,d_2,d_3)$ such that for every mapping $F\in U$ the map germ $(F,0)$ is $\mathcal{A}$-finitely determined. Moreover, in this case we compute the number of discrete singularities ($0$-stable singularities) of a generic mapping $(f_1,f_2,f_3):\C^3\to\C^3$, where $\deg f_i=d_i$.

\end{abstract}

\maketitle

\bibliographystyle{alpha}

\section{Introduction}
Let $\Omega(d_1,\ldots,d_n)$ denote the set of all polynomial mappings $F=(f_1,\ldots,f_n): \C^n\to\C^n$ such that  $\deg f_i=d_i$. 
We have proved in \cite{fjr} that there is an open subset $U\subset\Omega(d_1,\ldots,d_n)$ such that for every $F\in U$ the mapping  $F$ is transversal to the Thom-Boardman varieties and satisfies the normal crossings property. 
Moreover, by \cite{jel} all such mappings are topologically equivalent, in particular they have the same number of discrete singularities. If $U_0\subset \Omega(d_1,\ldots,d_n)$ is the maximal open subset
with these properties (i.e., for every $F\in U_0$ the mapping $F$ has constant topological type and it is transversal to the Thom-Boardman varieties and satisfies the normal crossings property) then we say that every mapping $F\in U_0$ is a generic mapping,

Let $F\in \Omega(d_1,\ldots,d_n)$ be a generic polynomial mapping.  In particular in Mathers nice dimensions (see \cite{mathVI}) $F$ is a stable mapping. 
In \cite{fjr} we have computed the number of cusps and nodes for $F$ in dimension $n=2$. Now we would like to compute the number of discrete singularities ($0$-stable singularities) in dimension $n=3$.

Note that a generic polynomial mapping $F: \C^n\to\C^n$ can be defined at infinity only if $d_1=d_2=\ldots=d_n=d$. However even in this case the mapping $F$ (if non-linear) has to be degenerate at infinity, i.e., the whole hyperplane at infinity is a 
 component of the critical set of $F.$  Indeed the topological degree of $F$ is $\mu(F)=d^n$, but the mapping $F$ restricted to the infinity has topological degree at most $d^{n-1}$. Hence the critical set of $F$ is not smooth and consequently  such a mapping can never be stable as a mapping from $\Bbb P^n$ to $\Bbb P^n$. 
In particular we can not use here global techniques based on Thom polynomials. 

However, in some cases we can apply local methods using Thom polynomials described by Ohmoto \cite{ohm} (see also \cite{saia}, \cite{saia2}, \cite{ruas}). Indeed, let $F:\C^3\to\C^3$ be a generic mapping. Since the pair $(3,3)$ is a pair of nice dimensions, the mapping $F$ is stable. For $F=(f_1,f_2,f_3)\in\Omega(d_1,d_2,d_3)$ we denote by $\overline{f}_i$ the homogeneous part of $f_i$ of degree $d_i$ and set $F_0=(\overline{f}_1,\overline{f}_2,\overline{f}_3)$. Hence $F_0$ has a stable deformation $F_t(x)=(t^{d_1}f_1(x/t), t^{d_2}f_2(x/t), t^{d_3}f_3(x/t))$. Assume that $(F_0,0)$ is an $\mathcal{A}$-finitely determined germ. Since the deformation $F_t$ contracts all discrete singularities to $0$ as $t\to 0$, we can compute the number of discrete singularities of $F$ using the local formulas of Ohmoto for the mapping $F_0$. Hence the fundamental problem here is to describe $\mathcal{A}$-finitely determined homogeneous mappings $H: \C^3\to\C^3$. We denote by $H(d_1,d_2,d_3)$ the set of all homogeneous polynomial mappings $F=(f_1,f_2,f_3): \C^3\to\C^3$, such that $\deg f_i=d_i$. Our first main result is:
\begin{changednumbertheorem}[\ref{Thm:main1}]
If $\gcd(d_i,d_j)\leq 2$ for $1\leq i<j\leq 3$ and $\gcd(d_1,d_2,d_3)=1$ then there is a non-empty Zariski open subset $U\subset H(d_1,d_2,d_3)$ such that for every mapping $F\in U$ the map germ $(F,0)$ is $\mathcal{A}$-finitely determined.

On the other hand if $\gcd(d_i,d_j)>2$ for some $i,j\in\{1,2,3\}$, $i\neq j$ or $\gcd(d_1,d_2,d_3)>1$, then there are no $\mathcal{A}$-finitely determined homogeneous map germs with degrees $d_1,d_2,d_3$.
\end{changednumbertheorem}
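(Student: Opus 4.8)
The plan is to characterise $\mathcal{A}$-finiteness of $(F,0)$ geometrically and then read off the arithmetic conditions from the symmetries of a homogeneous map. By the Mather--Gaffney criterion, $(F,0)$ is $\mathcal{A}$-finitely determined if and only if $F$ has an isolated instability, i.e. there is a representative on a ball $B$ whose restriction to $B\setminus\{0\}$ is multi-locally stable. Homogeneity gives $F(\lambda x)=L_\lambda F(x)$ with $L_\lambda=\mathrm{diag}(\lambda^{d_1},\lambda^{d_2},\lambda^{d_3})$, and since multiplication by $\lambda\neq0$ and $L_\lambda$ are linear isomorphisms, the multi-germ of $F$ along any punctured line through $0$ has constant $\mathcal{A}$-type. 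Hence the instability locus is a cone, and $(F,0)$ is $\mathcal{A}$-finite if and only if its projectivisation $\Sigma_{\mathrm{inst}}\subset\mathbb{P}^2$ is empty, equivalently $F$ is stable at every $x\neq 0$. The crucial observation is that a single line of instability is already $1$-dimensional, so producing one unstable direction destroys $\mathcal{A}$-finiteness.

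First I would record the forced coincidences. On the coordinate surface $S_i=\{f_i=0\}$ the remaining components are invariant under $x\mapsto\zeta x$ for every $\mu_i$-th root of unity $\zeta$, where $\mu_i:=\gcd(d_j,d_k)$ with $\{i,j,k\}=\{1,2,3\}$: since $\zeta^{d_j}=\zeta^{d_k}=1$ one gets $F(\zeta x)=F(x)$ there, while globally $F(\zeta x)=F(x)$ for every $\gcd(d_1,d_2,d_3)$-th root of unity. Thus along $S_i$ the orbit $\{\zeta^t x\}$ yields a $\mu_i$-fold multigerm over $F(x)$. A direct computation shows that near such an orbit the branches differ only by the linear target automorphisms $T_t=\mathrm{diag}(\dots)$ rescaling the $i$-th coordinate; consequently, at a fold point $x\in S_i\cap\Sigma(F)$ the critical-value hypersurfaces of the branches have normal covectors all lying in the $2$-plane $\mathrm{span}\{e_i^{*},\,\nu_h\}$, where $e_i^{*}$ is dual to the rescaled coordinate and $\nu_h$ is the common horizontal part. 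This is the crux computation: such covectors can be pairwise independent when $\mu_i=2$, but any three of them are linearly dependent, so the branches form normal crossings exactly when $\mu_i\le 2$.

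For the positive direction I would fix the gcd hypotheses and argue that a generic $F$ is stable off $0$. Genericity from \cite{fjr}, upgraded to a jet-transversality statement inside the family $H(d_1,d_2,d_3)$ and uniform in $x\neq 0$, gives transversality to all Thom--Boardman strata; since $\Sigma(F)$ is a cone, off the origin only folds (along a curve in $\mathbb{P}^2$) and cusps (at finitely many points) occur, with no swallowtails, and the ordinary multiple points are normal crossings. Since $\gcd(d_1,d_2,d_3)=1$ there is no global symmetry, so for directions with all $f_i\neq0$ the double-point cone has expected dimension and is normal-crossing. Since $\mu_i=\gcd(d_j,d_k)\le 2$, each $S_i$ carries at most a bi-germ; moreover $\gcd(d_1,d_2,d_3)=1$ forces $d_i$ odd when $\mu_i=2$, so the rescaling $(-1)^{d_i}=-1$ is nontrivial and the two critical normals $(\ast,\ast,c)$, $(\ast,\ast,-c)$ are transverse, keeping the bi-germ stable even at its fold points. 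Finally the large-symmetry loci $S_i\cap S_j$ are only finitely many points of $\mathbb{P}^2$, which the fold curve $\Sigma(F)$ generically avoids, so there $F$ is regular and the multigerms (local diffeomorphisms differing by linear target maps) are stable. Hence $\Sigma_{\mathrm{inst}}=\emptyset$ on a non-empty Zariski-open $U$.

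For the converse, if $\gcd(d_1,d_2,d_3)>1$ then $F(\zeta x)=F(x)$ holds for all $x$, so at every point of the non-empty critical cone $\Sigma(F)$ the branch at $\zeta x$ is an exact copy of the branch at $x$; two identical fold germs never form normal crossings, so the whole critical cone is unstable and $F$ is not $\mathcal{A}$-finite. If instead some $\mu_i=\gcd(d_j,d_k)>2$, then by B\'ezout $S_i\cap\Sigma(F)\neq\emptyset$ (the degenerate cases $f_i\equiv0$ or $\det DF\equiv0$ being non-finite anyway); at a point there which is a fold with $F(x)\neq0$ (a more degenerate point only making the multigerm more unstable) the crux computation gives $\mu_i\ge3$ branches whose critical normals lie in a $2$-plane and are therefore dependent, so the multigerm fails to be normal crossings and that whole line is unstable, for every $F$. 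The main obstacle I anticipate is the positive direction's genericity step: establishing Thom--Boardman transversality using only homogeneous deformations and uniformly over all $x\neq0$, away from the unavoidably degenerate origin, together with verifying that the stratified coincidence analysis (generic directions, single surfaces $S_i$, and the intersections $S_i\cap S_j$) is exhaustive; the converse, by contrast, rests cleanly on the coplanar-normals computation.
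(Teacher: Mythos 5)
Your overall architecture matches the paper's: the Mather--Gaffney/Wall geometric criterion (Theorem \ref{th_geomcrit}) reduces $\mathcal{A}$-finiteness to $F^{-1}(0)=\{0\}$ plus stability off the origin, homogeneity makes the instability locus a cone, and the converse is driven by the forced identifications $F(\zeta x)=F(x)$ on $V(f_i)$ for $\gcd(d_j,d_k)$-th roots of unity $\zeta$ (Remark \ref{rem_notcoprime}). Your coplanar-normals computation for the converse is correct and in fact more explicit than the paper's remark: with $\mu_i\geq 3$ branches of the discriminant related by rescaling the $i$-th target coordinate by roots of unity, the normals at the common value lie in a $2$-plane, so the fold surfaces cannot be in general position. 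You only need to add that $V(f_i)\cap C(F)$ is nonempty and positive-dimensional for \emph{every} $F$ (it is an intersection of two hypersurface cones, hence contains a ray unless $F$ is already non-finite), and your treatment of $\gcd(d_1,d_2,d_3)>1$ (global $\geq 2$-to-$1$ coincidence of identical branches) is fine.

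The genuine gap is the positive direction, which is the bulk of the paper (Lemma \ref{lem_genhom}). You assert that the transversality results of \cite{fjr} can be ``upgraded to a jet-transversality statement inside the family $H(d_1,d_2,d_3)$ and uniform in $x\neq 0$'', and you flag this yourself as the main obstacle; but \cite{fjr} concerns the full family $\Omega(d_1,\ldots,d_n)$, where one may perturb by arbitrary lower-order terms, and no such perturbations are available inside $H(d_1,d_2,d_3)$: the origin is unavoidably degenerate and genericity away from it must be proved using only homogeneous coefficients. The paper does this by hand: for each required property ((1) $F^{-1}(0)=\{0\}$; (2) injectivity on rays of $C(F)$, which is exactly where the hypotheses $\gcd(d_i,d_j)\leq 2$ and $\gcd(d_1,d_2,d_3)=1$ enter; (3)--(4) at most double points of $F|_{C(F)}$; (5) only folds and cusps off $0$; (6) cusps with singleton critical fibres; (7) normal crossings at double points, including the antipodal case $p_2=-p_1$) it forms an incidence variety $X_k\subset(\C^3)^{*t}\times H(d_1,d_2,d_3)$, computes the codimension of a fibre over $(\C^3)^{*t}$ by writing explicit equations in the coefficients $a_{i,j;k}$, and then exploits the cone structure of the fibres over $H(d_1,d_2,d_3)$ to convert ``generically finite'' into ``generically empty''. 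None of this is carried out in your proposal: in particular your claims that each $S_i$ carries at most a bi-germ, that the two antipodal fold normals are independent, and that the fold cone avoids $S_i\cap S_j$ are precisely Lemma \ref{lem_genhom}(4), (7) and (2), and each requires such a dimension count. Note also that your statement of the geometric criterion omits the finiteness condition $F^{-1}(0)=\{0\}$, which is itself a genericity requirement (Lemma \ref{lem_genhom}(1)) and not automatic for homogeneous maps.
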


This is an extension of the well-known two-dimensional result of Gaffney-Mond \cite{gm1} to dimension three. This theorem has the following nice application:
\begin{changednumbertheorem}[\ref{Thm:main2}]
If $\gcd(d_i,d_j)\leq 2$ for $1\leq i<j\leq 3$ and $\gcd(d_1,d_2,d_3)=1$ then there is a non-empty Zariski open subset $U_1\subset \Omega(d_1,d_2,d_3)$ such that for every mapping $F\in U_1$ we have:
\begin{itemize}
\item $F$ is stable, in particular the discrete mono- or multi-singularities are of type $A_3$, $A_2A_1$ or $A_1^3$,
\item $F$ has precisely $\# A_3=c_1^3+3c_1c_2+2c_3$ singularities of type $A_3$,
\item $F$ has precisely $\# A_2A_1=(P-3)s_1\# A_2-3\# A_3$ singularities of type $A_2A_1$,
\item $F$ has precisely $\displaystyle \frac{1}{6}\left[(P^2-3P+2)s_1^3-6\# A_2A_1-6\# A_3-3s_1\# A_1^2-4s_1\# A_2 \right]$ singularities of type $A_1^3$.
\end{itemize}

Here $s_1=(d_1+d_2+d_3-3)$, $s_2=(d_1-1)(d_2-1)+(d_1-1)(d_3-1)+(d_2-1)(d_3-1)$, $s_3=(d_1-1)(d_2-1)(d_3-1)$, $P=d_1d_2d_3$, $c_1=s_1$, $c_2=s_2-s_1$,  $c_3=s_3-2s_2+s_1$, $\# A_2=c_1^2+c_2$ and $\# A_1^2=(P-2)s_1^2-2\# A_2$.
\end{changednumbertheorem}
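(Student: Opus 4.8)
The plan is to deduce Theorem \ref{Thm:main2} from Theorem \ref{Thm:main1} together with Ohmoto's local formulas for $0$-stable singularities \cite{ohm}, the real content being an intersection-theoretic evaluation of the relevant Thom polynomials on the homogeneous germ $F_0$. First I would produce the Zariski-open set $U_1\subset\Omega(d_1,d_2,d_3)$: since $(3,3)$ lies in Mather's nice dimensions (see \cite{mathVI}), any mapping transversal to the Thom--Boardman strata and with normal crossings is stable, and by \cite{fjr} this holds on a Zariski-open set, which I intersect with the locus where Theorem \ref{Thm:main1} applies to the associated $F_0$. For such $F$ the only $0$-stable (isolated, codimension-$3$) local types are the swallowtail mono-germ $A_3$, the bi-germ $A_2A_1$ (a cusp branch through a fold branch), and the tri-germ $A_1^3$ (three fold branches), by Mather's classification of stable germs in these dimensions.

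Next I would carry out the reduction to $F_0$. By Theorem \ref{Thm:main1} the germ $(F_0,0)$ is $\mathcal{A}$-finitely determined, so the stable deformation $F_t$ of the introduction has finitely many discrete singularities, all contracted to the origin as $t\to0$; hence the number of discrete singularities of each type for $F$ equals the local multiplicity at the origin of the corresponding $0$-stable singularity scheme of $F_0$, which is computed by the equivariant Thom polynomial of that type. These Thom polynomials are universal polynomials in the Chern classes $c_i=c_i(F_0^{*}TN-TM)$ and in the topological degree $P=d_1d_2d_3$ (the number of preimages, which governs the multi-germ combinatorics). Because $F_0$ is homogeneous of multidegree $(d_1,d_2,d_3)$, under the natural $\mathbb{C}^{*}$-action the Chern roots of $F_0^{*}TN$ are $d_1t,d_2t,d_3t$ and those of $TM$ are $t,t,t$, so that
\[
c(F_0^{*}TN-TM)=\frac{(1+d_1t)(1+d_2t)(1+d_3t)}{(1+t)^3}.
\]
Expanding gives $c_1=s_1$, $c_2=s_2-s_1$, $c_3=s_3-2s_2+s_1$, exactly the quantities in the statement, and the local multiplicity at the origin is read off as the top-degree coefficient in $t$ (equivalently, by equivariant localization at the fixed point).

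With the Chern data in hand the counts assemble as follows. The mono-germ number is immediate from the classical Thom polynomial $tp(A_3)=c_1^3+3c_1c_2+2c_3$, giving $\#A_3$; likewise $tp(A_2)=c_1^2+c_2$ produces the auxiliary cusp number $\#A_2$, and the double-point formula for the critical (fold) curve of degree $s_1$ yields the auxiliary $\#A_1^2=(P-2)s_1^2-2\#A_2$. The discrete multi-germ counts then follow from the multiple-point formulas: the incidences of the cusp configuration with the remaining fold sheets give the naive term $(P-3)s_1\#A_2$, from which the swallowtail degenerations must be removed, so $\#A_2A_1=(P-3)s_1\#A_2-3\#A_3$; and the naive triple self-intersection $(P-1)(P-2)s_1^3$ of the fold locus, after subtracting the $A_2A_1$, $A_3$, $A_1^2$ and $A_2$ degenerations and dividing by $3!$ to unorder the three sheets, yields the stated expression for $\#A_1^3$.

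The delicate point is this last step for the multi-germs: justifying the exact correction coefficients ($-3$, $-6$, $-6$, $-3s_1$, $-4s_1$, and the factor $\tfrac16$) via Ohmoto's multi-singularity theory. These encode the combinatorics of colliding sheets --- precisely how a single higher singularity ($A_3$ or $A_2A_1$) is over-counted in the naive self-intersection of a lower configuration --- and require a careful inclusion--exclusion over the associated multiple-point spaces. By contrast, the mono-germ $A_3$ computation and the Chern-class evaluation are routine once the equivariant set-up above is in place.
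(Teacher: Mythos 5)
Your proposal follows essentially the same route as the paper: construct $U_1$ by intersecting the transversality locus of \cite{fjr} with the set of $F$ whose top-degree part $F_0$ lies in the open set of Theorem \ref{Thm:main1}, deduce stability of $F$, use the deformation $F_t$ to contract all discrete singularities to the origin, and read off the counts from Ohmoto's local formulas for the $\mathcal{A}$-finitely determined germ $(F_0,0)$. The only difference is one of detail, not of method: where you sketch the equivariant Chern-class evaluation $c(F_0^{*}TN-TM)=(1+d_1t)(1+d_2t)(1+d_3t)/(1+t)^3$ and the multi-germ inclusion--exclusion, the paper simply cites \cite[Example 5.9]{ohm} for the numerical formulas, so your added derivation is consistent with (and effectively unpacks) the cited reference.
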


\begin{remark}
The proof works only for $\mathcal{A}$-finite determined map germs, i.e., for $d_1,d_2,d_3$ as in Theorem \ref{Thm:main1}. However, we intend to prove in a separate paper, by using global methods rather then local, that the formula for the number of $A_3$ singularities holds for all degrees. However the formula for the number of $A_2A_1$ and $A_1^3$ singularities depends on $\gcd(d_1,d_2,d_3)$.
\end{remark}

\section{Main result}
For a polynomial mapping $F: \C^n\to\C^m$ let us denote by $C(F)$ the set of critical points of $F$ and by $\Delta(F)=F(C(F))$ the discriminant of $F$.

Moreover, we call a line through the origin a \emph{ray}. We will denote by $(\C^n)^{*t}$ the set $\{(p_1,\ldots,p_t):\ p_i\in\C^n,\ p_i\neq 0$ and $p_i\neq p_j$ for $i\neq j\}$. If $p\in(\C^n)^*$ then we denote by $\C p$ the unique ray passing through $p$.

\begin{lemma}\label{lem_genhom}
Assume that $\gcd(d_i,d_j)\leq 2$ for $1\leq i<j\leq 3$ and $\gcd(d_1,d_2,d_3)=1$. There is a non-empty open subset $U\subset H(d_1,d_2,d_3)$ such that for every mapping $F=(f_1,f_2,f_3)\in U$:
\begin{enumerate}
\item $F^{-1}(0)=\{(0)\}$,
\item if $d_1,d_2,d_3$ are pairwise co-prime then $F$ restricted to any ray contained in $C(F)$ is injective, if they are not co-prime, i.e., $d_i$ is odd and the other two are even, then $F$ restricted to any ray contained in $C(F)\setminus V(f_i)$ is injective and $F$ restricted to any of the finite number of rays contained in $C(F)\cap V(f_i)$ is $2:1$,
\item $F_{|C(F)}$ is injective outside a finite set of rays,
\item if $p\in\Delta(F)$ then $|F^{-1}(p)\cap C(F)|\leq 2$,
\item outside the origin the singularities of $F$ are either folds or cusps, in particular $C(F)\setminus \{0\}$ is smooth,
\item if $F$ has a cusp at $p$ then $F^{-1}(F(p))\cap C(F)=\{p\}$,
\item if $|F^{-1}(p)\cap C(F)|= 2$ then the surface $\Delta(F)$ has a normal crossing at $p$.
\end{enumerate}
\end{lemma}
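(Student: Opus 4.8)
The plan is to obtain $U$ as the complement of finitely many proper Zariski-closed ``bad'' subsets of $H(d_1,d_2,d_3)$, one per failure mode, the organizing principle being the quasi-homogeneity of $F$. Writing $m_\lambda(x)=\lambda x$ and $D_\lambda=\mathrm{diag}(\lambda^{d_1},\lambda^{d_2},\lambda^{d_3})$, the identity $F\circ m_\lambda=D_\lambda\circ F$ shows that for $\lambda\neq 0$ the germs $(F,p)$ and $(F,\lambda p)$ are $\mathcal{A}$-equivalent through linear coordinate changes. Hence every locus defined by a singularity type---each Thom--Boardman stratum, the cusp locus, and the multiple-point loci---is a cone in the source, and its image is invariant under the weighted action $D_\lambda$ in the target. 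This single observation converts ``expected dimension'' statements into ``finiteness or emptiness away from the origin'' statements: a stratum of expected target-codimension $3$ must collapse to $\{0\}$, while a cone of expected dimension $1$ in the source must be a finite union of rays.

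First I would settle (1) and (5). Part (1) asserts that $\overline f_1,\overline f_2,\overline f_3$ have no common zero in $\mathbb{P}^2$; since three generic curves in $\mathbb{P}^2$ have empty intersection, the bad set is a proper subvariety cut out by a resultant. For (5) I would invoke transversality to the Thom--Boardman varieties as in \cite{fjr}. Because $\Sigma^2$ has codimension $4>3$ it is empty, so $C(F)\setminus\{0\}=\Sigma^1$ is smooth; and since the $A_3$ stratum has codimension $3$, its cone structure forces it into the origin, leaving only folds $A_1$ and cusps $A_2$ off the origin.

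Next come the ray computation (2) and the multiple-point statements (3), (4), (6), (7). On a ray $\C p\subset C(F)$ the map is $\lambda\mapsto(\lambda^{d_1}a_1,\lambda^{d_2}a_2,\lambda^{d_3}a_3)$ with $a_i=f_i(p)$, so two parameters collide precisely when $(\lambda/\mu)^{d_i}=1$ for every $i$ with $a_i\neq 0$; taking gcd's yields injectivity whenever the degrees indexed by the nonvanishing $a_i$ are coprime, and the $2{:}1$ behaviour exactly on the rays of $C(F)\cap V(f_i)$ in the non-coprime case, where the relevant gcd is $2$. Finiteness of these exceptional rays, and the fact that generic critical rays avoid the pairwise intersections $V(\overline f_i)\cap V(\overline f_j)$, are B\'ezout counts in $\mathbb{P}^2$: the critical curve meets $V(\overline f_i)$ in a finite set and is generically disjoint from $V(\overline f_i)\cap V(\overline f_j)$. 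For (3), a generic stable map is birational onto its discriminant, so the non-injective locus of $F|_{C(F)}$ has dimension $\le 1$; being a cone, it is a finite union of rays. For (4) and (6), the triple-fold locus $A_1^3$ and the cusp-fold locus $A_2A_1$ have expected target-codimension $3$, hence expected dimension $0$, and their $D_\lambda$-invariance collapses them to the origin; this gives $|F^{-1}(p)\cap C(F)|\le 2$ off the origin (and $=1$ at the origin, by (1)) and shows cusps are isolated mono-germs. Finally (7) is the normal-crossings genericity of \cite{fjr}.

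The main obstacle is twofold. First, all of the genericity inputs---transversality to Thom--Boardman, generic injectivity of $F|_{C(F)}$, and normal crossings---must be secured inside the homogeneous family $H(d_1,d_2,d_3)$, which is far more constrained than the full space $\Omega(d_1,d_2,d_3)$ treated in \cite{fjr}; this forces one to rerun the multijet-transversality arguments with jets parametrized only by the coefficients of the forms, checking that the relevant evaluation maps remain submersive. Second, the weighted target action $D_\lambda$---rather than ordinary scaling---must be tracked throughout the multiple-point analysis: it is what guarantees that the double-point locus is a genuine cone and that no triple points survive away from the origin, and it is precisely where the arithmetic hypotheses $\gcd(d_i,d_j)\le 2$ and $\gcd(d_1,d_2,d_3)=1$ enter, through the collision condition $(\lambda/\mu)^{d_i}=1$.
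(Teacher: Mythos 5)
Your high-level architecture is the same as the paper's: for each failure mode you form a bad incidence locus, and you use the cone structure coming from $F\circ m_\lambda = D_\lambda\circ F$ to upgrade ``generically finite fibers'' to ``generically empty fibers'' (this is exactly the paper's observation that a fiber of the projection to $H(d_1,d_2,d_3)$ is a union of rays, hence empty or infinite). Your ray computation for (2), with the collision condition $(\lambda/\mu)^{d_i}=1$ and the resulting $2{:}1$ behaviour on $C(F)\cap V(f_i)$, is precisely the paper's argument. However, there is a genuine gap, and you have in effect named it yourself without closing it: every genericity input you invoke --- transversality to Thom--Boardman strata, generic injectivity of $F|_{C(F)}$, the multiple-point codimension counts for (3), (4), (6), and normal crossings for (7) --- is taken from \cite{fjr}, which proves these for the full space $\Omega(d_1,\ldots,d_n)$, not for the homogeneous family $H(d_1,d_2,d_3)$. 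A generic element of $H$ is not a generic element of $\Omega$, and the lemma is actually \emph{false} for some degree triples (Remark 2.2 in the paper), so these statements cannot be imported; they must be re-proved inside $H$. That re-proof is the entire content of the paper's argument: for each $X_i$ one fixes a fiber over a normalized configuration of points (using $GL(3)$-invariance) and verifies by explicit manipulation of the coefficients $a_{i,j;k}$ that the defining equations are independent, e.g.\ the determinant computations for $X_2'$, $X_3'$ and the minor computation $\det(\partial(J,J_{1,1},J_{2,1})/\partial(a_{1,0;1},a_{2,0;1},a_{3,0;1}))=12m_{1;1}^6$ for $X_5'$. Saying that one ``must rerun the multijet-transversality arguments, checking that the relevant evaluation maps remain submersive'' is a statement of the problem, not a solution.

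A second, more concrete omission: the naive expected-codimension count fails for certain degenerate configurations, and the paper has to treat these separately with a balancing argument. For (4), triples $(p_1,p_2,p_3)$ coplanar with the origin give a fiber of codimension only $8$, which is saved only because the set of such triples has dimension $8$; and when some $\gcd(d_i,d_j)=2$ one must separately handle antipodal pairs $p_2=-p_1$ in (4), (6) and (7), where the equations $F(p_1)=F(p_2)$ and $J(F)(p_1)=J(F)(p_2)=0$ degenerate and the fiber codimension drops, again compensated by the smaller dimension of the antipodal locus. These cases are exactly where the arithmetic hypotheses bite, and your proposal does not detect them; without them the dimension count for the bad loci is simply wrong in the non-coprime case. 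Your appeal for (3) to ``a generic stable map is birational onto its discriminant'' is also circular in this setting, since stability of $F$ away from the origin is what the lemma is being used to establish.
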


\begin{proof}
We will consider the sets $X_1,\ldots,X_7\subset(\C^3)^{*t}\times H(d_1,d_2,d_3)$, where $t\in\{1,2,3\}$, consisting of points and mappings that do not satisfy the assertions above. We will show that $\dim(X_1),\ldots,\dim(X_7)\leq \dim(H(d_1,d_2,d_3))$ and consider the projections $X_i\rightarrow H(d_1,d_2,d_3)$. The inequality between dimensions shows, that there is an non-empty open subset $U\subset H(d_1,d_2,d_3)$ over which the fibers of the projections are finite. However, since we consider homogeneous mappings if a point (in $(\C^3)^{*t}$) is in the fiber then the whole ray through this point must also be in the fiber, i.e., the fibers are either empty or infinite. Consequently mappings in $U$ satisfy the desired properties.

The sets $X_i$ will be invariant under linear transformations in the following sense: if $T\in GL(3)$ and $(p_1,\ldots,p_t,F)\in X_i$ then $(T(p_1),\ldots,T(p_t),F\circ T^{-1})\in X_i$. Consequently, to compute $\dim(X_i)$ we will only have to compute the dimensions of selected fibers (in most cases only one) of the projection $X_i\rightarrow (\C^3)^t$.

We denote by $a_{i,j;k}$ the parameters in $H(d_1,d_2,d_3)$ giving the coefficients of $f_k$ at $x^{d_k-i-j}y^iz^j$.

The proofs of all assertions follow the same pattern, thus in later assertions we will omit the details explained in the proofs of earlier ones. When relevant we will first assume that $d_1,d_2,d_3$ are pairwise co-prime and later consider the case when they are not. By symmetry we may assume that when $d_1,d_2,d_3$ are not pairwise co-prime then $d_1$ and $d_2$ are even and $d_3$ is odd.

(1) Consider $X_1=\{(p,F)\in(\C^3)^*\times H(d_1,d_2,d_3): F(p)=(0,0,0)\}$. As explained above we have to show that $\dim(X_1)\leq \dim(H(d_1,d_2,d_3))$ and this follows from the fact that $\dim(X_1\cap \{(1,0,0)\}\times H(d_1,d_2,d_3))\leq \dim(H(d_1,d_2,d_3))-3$. Let $X_1'$ denote $X_1\cap \{(1,0,0)\}\times H(d_1,d_2,d_3)$, we will treat it as a subset of $H(d_1,d_2,d_3)$. We obtain the equations of $X_1'$ by substituting $(1,0,0)$ into the equations of $X_1$, we have $f_1(1,0,0)=\sum a_{i,j;k}x^{d_k-i-j}y^iz^j(1,0,0)=a_{0,0;1}=0$ and $f_2(1,0,0)=a_{0,0;2}=0$ and $f_2(1,0,0)=a_{0,0;3}=0$. Thus $X_1'=V(a_{0,0;1},a_{0,0;2},a_{0,0;3})$ has codimension $3$ in $H(d_1,d_2,d_3)$, as required.

(2) Let $F=(f_1,f_2,f_3)$ and $p\in(\C^3)^*$. Note that if any two of $f_1,f_2,f_3$ are nonzero at $p$, say $f_1(p),f_2(p)\neq 0$, then $F$ restricted to $\C p$ is injective. Indeed, if $q=\lambda p$ and $F(p)=F(q)$, then $f_1(p)=f_1(\lambda p)=\lambda^{d_1}f_1(p)$. Thus $\lambda^{d_1}=1$ and similarly $\lambda^{d_2}=1$, if $\gcd(d_1,d_2)=1$ then it follows that $\lambda=1$. If $\gcd(d_1,d_2)=2$ then $\lambda=1$ or $\lambda=-1$.

Thus we have to show that for a generic $F$ we have $C(F)\cap V(f_i,f_j)=\{0\}$. We show the proof for $f_1$ and $f_2$, the other two pairs follow by symmetry. Consider $X_2=\{(p,F)\in(\C^3)^*\times H(d_1,d_2,d_3): f_1(p)=f_2(p)=J(F)(p)=0\}$. Similarly as in the proof of (1) we define $X_2'=X_2\cap \{(1,0,0)\}\times H(d_1,d_2,d_3)$ and treat it as a subset of $H(d_1,d_2,d_3)$. We have to show, that $X_2'$ has codimension $3$. As for $X_1'$, the first two equations of $X_2'$ are $a_{0,0;1}=0$ and $a_{0,0;2}=0$. The third equation is
$$J(F)(1,0,0)=\det\left[\begin{matrix}d_1a_{0,0;1}&a_{1,0;1}&a_{0,1;1}\\d_2a_{0,0;2}&a_{1,0;2}&a_{0,1;2}\\d_3a_{0,0;3}&a_{1,0;3}&a_{0,1;3}
\end{matrix}\right]=0,$$
after substituting $a_{0,0;1}=a_{0,0;2}=0$ it simplifies to $d_3 a_{0,0;3}(a_{1,0;1}a_{0,1;2}-a_{0,1;1}a_{1,0;2})=0$. The three equations are clearly independent, thus $X_2'$ has codimension $3$ in $H(d_1,d_2,d_3)$, as required.

If $\gcd(d_1,d_2)=2$ then we additionally have to show that there is only a finite number of rays contained in $C(F)\cap V(f_3)$. Consider $X_{2a}=\{(p,F)\in(\C^3)^*\times H(d_1,d_2,d_3): f_3(p)=J(F)(p)=0\}$. Similarly as for $X_2$ we show that $X_{2a}$ has codimension $2$, hence the general fiber of the projection $X_2\rightarrow H(d_1,d_2,d_3)$ has dimension $1$, so it must be a finite union of rays.

(3) Consider $X_3=\{(p_1,p_2,F)\in(\C^3)^{*2}\times H(d_1,d_2,d_3): F(p_1)=F(p_2),\ J(F)(p_1)=J(F)(p_2)=0\}$. Let $X_3'$ be a nonempty fiber of the projection to $(\C^3)^{*2}$. By (2) we may assume that $F$ is injective on rays and consider only fibers over $(p_1,p_2)$ where $p_1$ and $p_2$ are not proportional. Since linear transformations induce isomorphisms of the fibers, we may assume that $(p_1,p_2)=((1,0,0),(0,1,0))$. Thus the equations for $X_3'$ are: $(a_{0,0;1},a_{0,0;2},a_{0,0;3})=(a_{d_1,0;1},a_{d_2,0;2},a_{d_2,0;3})$ and 

$$\det\left[\begin{matrix}d_1a_{0,0;1}&a_{1,0;1}&a_{0,1;1}\\d_2a_{0,0;2}&a_{1,0;2}&a_{0,1;2}\\d_3a_{0,0;3}&a_{1,0;3}&a_{0,1;3}
\end{matrix}\right]=\det\left[\begin{matrix}a_{d_1-1,0;1}&d_1a_{d_1,0;1}&a_{d_1-1,1;1}\\a_{d_2-1,0;2}&d_2a_{d_2,0;2}&a_{d_2-1,1;2}\\
a_{d_3-1,0;3}&d_3a_{d_3,0;3}&a_{d_3-1,1;3}\end{matrix}\right]=0.$$

The first three equations define a linear subspace of codimension $3$, the other two clearly do not have a common factor even after restricting to this subspace, i.e., after substituting $a_{0,0;k}$ for $a_{d_k,0;k}$ in the last equation. Thus $X_3'$ has codimension $5$ in $H(d_1,d_2,d_3)$ and consequently $\dim(X_2)\leq\dim(H(d_1,d_2,d_3))+1$. Note that if $(p_1,p_2,F)\in X_3$ then also $(\lambda p_1,\lambda p_2,F)\in X_3$ for $\lambda\in\C^*$, thus the nonempty fibers of the projection $X_3\rightarrow H(d_1,d_2,d_3)$ are infinite and so there are only finitely many of them.

(4) Consider $$X_4=\{(p_1,p_2,p_3,F)\in(\C^3)^{*3}\times H(d_1,d_2,d_3): F(p_1)=F(p_2)=F(p_3),$$ $$\ J(F)(p_1)=J(F)(p_2)=J(F)(p_3)=0\}.$$ Similarly as above we consider the fibers of the projection $X_4\rightarrow (\C^3)^{*3}$. However now we have to consider more than one case: if $p_1,p_2,p_3$ are not coplanar with the origin then we may assume that $(p_1,p_2,p_3)=((1,0,0),(0,1,0),(0,0,1))$, if $p_1,p_2,p_3$ are coplanar with zero then we can only assume that $(p_1,p_2,p_3)=((1,0,0),(0,1,0),(a,b,0))$ for some $a,b\in\C^*$. We denote the fiber by $X_4'$ in the former case and by $X_4^{ab}$ in the latter. If $\gcd(d_1,d_2)=2$ then we must additionally consider the case when two of the points are opposite. In that case we may assume that $(p_1,p_2,p_3)=((1,0,0),(-1,0,0),(0,1,0))$, we denote the fiber by $X_4^-$.

The equations for $X_4'$ are similar to those of $X_3'$. First we have $(a_{0,0;1},a_{0,0;2},a_{0,0;3})=(a_{d_1,0;1},a_{d_2,0;2},a_{d_3,0;3})=(a_{0,d_1;1},a_{0,d_2;2},a_{0,d_3;3})$ which define a linear subspace of codimension $3$. Then we have three equations given by determinants of a matrix. After restricting to the linear subspace the matrices have a common column: $[d_1a_{0,0;1},d_2a_{0,0;2},d_3a_{0,0;3}]$, but otherwise contain disjoint sets of variables. Thus the equations give a transverse intersection outside $V(a_{0,0;1},a_{0,0;2},a_{0,0;3})$ which itself has codimension $3$. So $X_4'$ has codimension $9$, as required.

For $X_4^{ab}$ we obtain the equations $(a_{0,0;1},a_{0,0;2},a_{0,0;3})=(a_{d_1,0;1},a_{d_2,0;2},a_{d_3,0;3})=\linebreak
(\sum a_{i,0;1}a^{d_1-i}b^i,\sum a_{i,0;2}a^{d_2-i}b^i,\sum a_{i,0;3}a^{d_3-i}b^i)$ which again define a linear subspace, though not as nicely as above. Furthermore we have the two equations with determinants from the definition of $X_3'$ and a third one that is derived from $J(F)(a,b,0)=0$. One can show that the last equation is independent from the previous ones, but in fact we do not need it. Note that the set of triples in $(\C^3)^{*3}$ coplanar with the origin has dimension $8$, so it suffices to show that $X_4^{ab}$ has codimension $8$ in $H(d_1,d_2,d_3)$. This way we obtain a peculiar geometric fact: for a generic $F\in H(d_1,d_2,d_3)$ and $p\in\Delta(F)$ if ${p_1,p_2}\in F^{-1}(p)\cap C(F)$ then none of the points in $F^{-1}(p)$ distinct from $p_1,p_2$ lie in the plane spanned by $p_1,p_2$ and the origin.

For $X_4^-$ we have $p_2=-p_1$ so the equation $F(p_1)=F(p_2)$ reduces to $f_3(p_1)=0$. Furthermore the equations $J(F)(p_1)=0$ and $J(F)(p_2)=0$ are equivalent. Thus $X_4^-$ is given only by $6$ independent equations: $F(p_1)=F(p_3)$, $f_3(p_1)=0$, $J(F)(p_1)=J(F)(p_3)=0$. However the set of points in $(\C^3)^{*3}$ satisfying $p_2=-p_1$ has also dimension $6$.

(5) Consider $X_5=\{(p,F)\in(\C^3)^*\times H(d_1,d_2,d_3): J(F)(p)=J_{1,i}(F)(p)=J_{2,i}(F)(p)=0\}$, where $1\leq i\leq 3$ and $J_{1,i}(F)$ is the determinant of the matrix that we obtain from the Jacobian matrix by replacing the row $\nabla f_i=[\frac{\partial f_i}{\partial x_j}]_{1\leq j\leq 3}$ with the row $\nabla J(F)$ and similarly for $J_{2,i}(F)$ by replacing the row $\nabla f_i$ with the row $\nabla J_{1,i}(F)$. Note that $X_5$ describes the set of pairs $(p,F)$ such that the singularity of $F$ at $p$ is a swallowtail or worse, i.e., is an $A_n$ singularity with $n\geq 3$ or a singularity of corank greater than $1$. Thus the only singularities that are not contained in $X_5$ are folds and cusps. Note that it also automatically excludes the possibility of $C(F)$ having singular points, i.e., points where $J(F)(p)=0$ and $\nabla J(F)(p)=[0,0,0]$. So it suffices to prove that $X_5$ has codimension at least $3$ and this can be done by considering the fiber $X_5'$ over $p_1=(1,0,0)$.

By taking the Laplace expansion of $J(F)(p_1)$ with respect to the second column we obtain $-a_{1,0;1}m_{1;1}+a_{1,0;2}m_{2;1}-a_{1,0;3}m_{3;1}$, where $m_{i;1}$ are the suitable minors, e.g., $m_{1;1}=d_2a_{0,0;2}a_{0,1;3}-d_3a_{0,0;3}a_{0,1;2}$. The formula for $J_{1,1}(F)(p_1)$ is too long to conveniently write down, however it is easy to see that it is the sum of $2a_{2,0;1}m_{1;1}^2$ and a polynomial that does not contain $a_{2,0;1}$. Indeed, the term $a_{2,0;1}$ can only come from $\frac{\partial^2 f_1}{\partial y^2}$ which can by only found in $\frac{\partial J(F)}{\partial y}$ by taking the derivative of $\frac{\partial f_1}{\partial y}$. Similarly, $6a_{3,0;1}m_{1;1}^3$is a summand of $J_{2,1}(F)(p_1)$. Consequently $\det\left(\frac{\partial J(F)(p_1),J_{1,1}(F)(p_1),J_{2,1}(F)(p_1)}{\partial a_{1,0;1},a_{2,0;1}, a_{3,0;1}}\right)=12m_{1;1}^6$, which proves that $X_5'\setminus V(m_{1;1})$ has codimension $3$. We make identical computations for $i\in\{2,3\}$ and computation with $J_{1,1}$, $J_{2,1}$, $a_{1,0;1}$, $a_{2,0;1}$, and $a_{3,0;1}$ replaced with $J_{1,i}$, $J_{2,i}$, $a_{1,0;i}$, $a_{2,0;i}$, and $a_{3,0;i}$, respectively, to obtain that 
$X_5'\setminus V(m_{1;1},m_{2;1},m_{3;1})$ has codimension $3$. The set $V(m_{1;1},m_{2;1},m_{3;1})$ has codimension $2$, it is given by the condition that the first and third column of $J(F)(p_1)$ are proportional, however, we can expand $J(F)(p_1)$ with respect to the third column and obtain $a_{0,1;1}m_{1;2}+a_{0,1;2}m_{2;2}-a_{0,1;3}m_{3;2}$. Proceeding as above we obtain that $X_5'\setminus V(m_{i;2})_{1\leq i\leq 3}$ has codimension $3$, since $V(m_{i;1},m_{i;2})_{1\leq i\leq 3}$ has also codimension $3$ it concludes the proof of (5).

(6) Consider $X_6=\{(p_1,p_2,F)\in(\C^3)^{*2}\times H(d_1,d_2,d_3): F(p_1)=F(p_2),\ J(F)(p_1)=J_{1,i}(F)(p_1)=J(F)(p_2)=0\}$. We have to prove that $X_6$ has codimension $6$. The argument is a mix of the arguments in (3) and (5). As above we focus on the fiber over $(p_1,p_2)=((1,0,0),(0,1,0))$. The equations obtained from $F(p_1)=F(p_2)$ define a linear subspace of codimension $3$. From (5) we obtain that $J(F)(p_1)=J_{1,i}(F)(p_1)=0$ give a space of codimension $2$. And the equation obtained from $J(F)(p_2)$ is independent from the previous ones outside $V(a_{0,0;1},a_{0,0;2},a_{0,0;3})$.

If $\gcd(d_1,d_2)=2$ then we must additionally consider the case $p_2=-p_1$. In this case the equation $F(p_1)=F(p_2)$ reduces to $f_3(p_1)=0$ and the equations $J(F)(p_1)=0$ and $J(F)(p_2)=0$ are equivalent. Thus the fiber of $X_6$ over $(p_1,p_2)$ has codimension $3$, however the space of points in $(\C^3)^*\times(\C^3)^*$ satisfying $p_2=-p_1$ has also codimension $3$, so the sum of fibers of this type has codimension $6$.

(7) Consider $X_7=\{(p_1,p_2,F)\in X_2: F(p_1)=F(p_2),\ d_{p_1}F(\C^3)=d_{p_2}F(\C^3)\}$. Note that since $\Delta(F)$ is a hypersurface either the two branches at $F(p_1)$ intersect transversally or they have equal tangent spaces, which is the condition that we added in the definition of $X_7$. As in (3) we look at the fiber over $(p_1,p_2)=((1,0,0),(0,1,0))$ and obtain the equations $(a_{0,0;1},a_{0,0;2},a_{0,0;3})=(a_{d_1,0;1},a_{d_2,0;2},a_{d_2,0;3})$ and $\rank A\leq 2$, where 
$$A=\left[\begin{matrix}
d_1a_{0,0;1}&a_{1,0;1}&a_{0,1;1}&a_{d_1-1,0;1}&d_1a_{d_1,0;1}&a_{d_1-1,1;1}\\
d_2a_{0,0;2}&a_{1,0;2}&a_{0,1;2}&a_{d_2-1,0;2}&d_2a_{d_2,0;2}&a_{d_2-1,1;2}\\
d_3a_{0,0;3}&a_{1,0;3}&a_{0,1;3}&a_{d_3-1,0;3}&d_3a_{d_3,0;3}&a_{d_3-1,1;3}
\end{matrix}\right].$$

After substituting $(a_{0,0;1},a_{0,0;2},a_{0,0;3})=(a_{d_1,0;1},a_{d_2,0;2},a_{d_2,0;3})$ into $A$ the first and the fifth column become equal, so we may cross the fifth one out without altering the rank. We obtain a $3\times 5$ matrix $A'$ with variables as entries, the condition $\rank A\leq 2$ defines a subset of codimension $3$ (on the open set where a $2\times 2$ minor is nonzero the set is given as the zero set of the three $3\times 3$ minors containing that $2\times 2$ minor). Together with the first three equations we obtain a set of codimension $6$.

If $\gcd(d_1,d_2)=2$ then we must additionally consider the case $(p_1,p_2)=((1,0,0),(-1,0,0))$. We obtain the equations $a_{0,0;3}=0$ and $\rank B\leq 2$, where
$$B=\left[\begin{matrix}
d_1a_{0,0;1}&a_{1,0;1}&a_{0,1;1}&-d_1a_{0,0;1}&-a_{1,0;1}&-a_{0,1;1}\\
d_2a_{0,0;2}&a_{1,0;2}&a_{0,1;2}&-d_2a_{0,0;2}&-a_{1,0;2}&-a_{0,1;2}\\
d_3a_{0,0;3}&a_{1,0;3}&a_{0,1;3}&d_3a_{0,0;3}&a_{1,0;3}&a_{0,1;3}
\end{matrix}\right].$$

After substituting $a_{0,0;3}=0$ and adding columns $1,2,3$ to columns $4,5,6$, respectively we obtain
$$B'=\left[\begin{matrix}
d_1a_{0,0;1}&a_{1,0;1}&a_{0,1;1}&0&0&0\\
d_2a_{0,0;2}&a_{1,0;2}&a_{0,1;2}&0&0&0\\
0&a_{1,0;3}&a_{0,1;3}&0&2a_{1,0;3}&2a_{0,1;3}
\end{matrix}\right].$$

The condition $\rank B'\leq 2$ means that either the first two rows are proportional or $a_{1,0;3}=a_{0,1;3}=0$. Both conditions define a subset of codimension $2$, together with $a_{0,0;3}=0$ we obtain codimension $3$. This is sufficient since the space of pairs $(p_1,p_2)\in(\C^3)^{*2}$ such that $p_2=-p_1$ has dimension $3$. 
\end{proof}

\begin{remark}\label{rem_notcoprime}
Note that Lemma \ref{lem_genhom} (2) fails if $\gcd(d_i,d_j)>2$ for some $i,j\in\{1,2,3\}$, $i\neq j$ or if $\gcd(d_1,d_2,d_3)>1$. Indeed, suppose $\gcd(d_1,d_2)=d>2$, for general $F\in H(d_1,d_2,d_3)$ the set $C(F)\cap V(f_3)$ consists of a finite and nonzero number of rays. If $p$ is an element of such a ray then for $\varepsilon^d=1$ we have $F(\varepsilon p)=F(p)$ and the mapping is actually $d:1$ on that ray. If $\gcd(d_1,d_2,d_3)>1$ then $F$ is not generically one to one on $C(F)$.
\end{remark}

We have the following geometric criterion for finite determinacy of homogeneous map germs (see \cite{wall}):

\begin{theorem}\label{th_geomcrit}
Let $F:(\C^3,0)\rightarrow(\C^3,0)$ be a holomorphic map germ. Then $F$ is $\mathcal{A}$-finitely determined if and only if there is a representative $F:U\subset \C^3\rightarrow V\subset \C^3$ such that
\begin{enumerate}
\item $F^{-1}(0)=\{0\}$,
\item the restriction $F_{|U\setminus\{0\}}: U\setminus\{0\}\to V\setminus\{0\}$ is stable.
\end{enumerate}
\end{theorem}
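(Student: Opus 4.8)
This is the Mather--Gaffney geometric criterion for $\mathcal{A}$-finite determinacy in the complex analytic, equidimensional case, and since it is cited to \cite{wall} my plan is to reduce both implications to two standard pillars of singularity theory rather than argue from scratch. The first pillar is the algebraic characterisation of finite determinacy: a germ $F$ is $\mathcal{A}$-finitely determined if and only if its extended codimension is finite, i.e. $\dim_{\C} N\mathcal{A}_e F<\infty$, where $N\mathcal{A}_e F=\theta(F)/\bigl(tF(\theta_3)+\omega F(\theta_3)\bigr)$ and $\theta_3$, $\theta(F)$ denote the usual modules of vector fields on the source and along $F$. The second pillar is Mather's theorem that for complex analytic germs infinitesimal stability coincides with stability; concretely, $F$ is stable along a fibre $F^{-1}(y)$ exactly when the localisation of $N\mathcal{A}_e F$ at $y$ vanishes. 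In this way the geometric phrase "stable away from the origin" is translated into the vanishing of a module away from the origin.

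The bridge between the two pillars is a coherence statement. When $F^{-1}(0)=\{0\}$ the germ $F$ is finite, so by the finite-mapping theorem $N\mathcal{A}_e F$ is a coherent module over the target ring $\mathcal{O}_3$ via $F$. For such a module the support is an analytic germ in the target, it coincides (by the second pillar) with the set of target points over which $F$ fails to be stable, and one has $\dim_{\C} N\mathcal{A}_e F<\infty$ if and only if this support is zero-dimensional, i.e. equals $\{0\}$.

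With these facts in place both directions are short. For ($\Leftarrow$), assumptions (1) and (2) say exactly that $F$ is finite and that its instability locus is contained in $\{0\}$; hence the support of the coherent module $N\mathcal{A}_e F$ is at most $\{0\}$, its $\C$-length is finite, and the first pillar yields $\mathcal{A}$-finite determinacy. For ($\Rightarrow$), finite $\mathcal{A}$-determinacy implies finite $\mathcal{K}$-determinacy since $\mathcal{A}\subseteq\mathcal{K}$, and in the equidimensional case the latter is equivalent to $\dim_{\C}\mathcal{O}_3/F^{*}\mathfrak{m}_3\mathcal{O}_3<\infty$, that is to $F^{-1}(0)=\{0\}$; this establishes (1). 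Finiteness of $F$ then makes $N\mathcal{A}_e F$ coherent over the target, and $\dim_{\C}N\mathcal{A}_e F<\infty$ forces its support, hence by the second pillar the instability locus, to reduce to the single point $0$, which is (2).

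The main obstacle is therefore not the bookkeeping above but the two imported theorems: the equivalence of finite $\mathcal{A}$-determinacy with finiteness of the $\mathcal{A}_e$-codimension (delicate precisely because $\mathcal{A}$ is not a geometric subgroup in Damon's sense, so one needs the Gaffney--Wall refinement) and the coherence of $N\mathcal{A}_e F$ together with the identification of its support with the instability set via Mather's stability theory. Both are established in \cite{wall} and in Mather's stability papers, so in the write-up I would simply invoke them. I note that no homogeneity of $F$ is used, the criterion being valid for an arbitrary holomorphic germ $(\C^3,0)\to(\C^3,0)$; homogeneity enters only later, when one converts "stable away from the origin" into a dimension count on $H(d_1,d_2,d_3)$ as in Lemma \ref{lem_genhom}.
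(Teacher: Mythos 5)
The paper does not actually prove this statement: it is quoted as a known result (the Mather--Gaffney geometric criterion) with a citation to \cite{wall}, so there is no internal proof to compare against. Your sketch is a correct outline of the standard argument that the cited source establishes: translate finite determinacy into $\dim_{\C}N\mathcal{A}_eF<\infty$, use that $F^{-1}(0)=\{0\}$ makes $F$ finite so that $N\mathcal{A}_eF$ becomes a coherent $\mathcal{O}_3$-module over the target, identify its support with the instability locus via Mather's infinitesimal criterion for multigerms, and observe that finite length is equivalent to zero-dimensional support; the forward direction of (1) via finite $\mathcal{A}$-determinacy $\Rightarrow$ finite $\mathcal{K}$-determinacy $\Rightarrow$ $\dim_{\C}\mathcal{O}_3/F^{*}\mathfrak{m}_3\mathcal{O}_3<\infty$ is also the standard route and is valid in the equidimensional case. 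Two small remarks. First, your parenthetical that the determinacy--codimension equivalence is delicate ``because $\mathcal{A}$ is not a geometric subgroup in Damon's sense'' is backwards: $\mathcal{A}$ \emph{is} a geometric subgroup in Damon's framework (that framework was built largely to cover $\mathcal{A}$); the genuine delicacy is rather that $T\mathcal{A}_eF$ is not a finitely generated module over a single local ring, which is why the Gaffney--du Plessis--Wall estimates are needed. This is an inaccurate aside, not a gap. Second, it is worth making explicit that ``stable'' in condition (2) must be read as stability of the multigerm of $F$ at $F^{-1}(y)\cap C(F)$ for each $y\in V\setminus\{0\}$; your support argument does capture exactly this, since the localisation of $N\mathcal{A}_eF$ at $y$ governs the whole fibre, but the statement as written could otherwise be misread as monogerm stability.
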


From Theorem \ref{th_geomcrit}, Lemma \ref{lem_genhom} and Remark \ref{rem_notcoprime} we obtain the following theorem:

\begin{theorem}\label{Thm:main1}
If $\gcd(d_i,d_j)\leq 2$ for $1\leq i<j\leq 3$ and $\gcd(d_1,d_2,d_3)=1$ then there is a non-empty Zariski open subset $U\subset H(d_1,d_2,d_3)$ such that for every mapping $F\in U$ the map germ $(F,0)$ is $\mathcal{A}$-finitely determined.

On the other hand if $\gcd(d_i,d_j)>2$ for $1\leq i<j\leq 3$ or $\gcd(d_1,d_2,d_3)>1$, then there are no $\mathcal{A}$-finitely determined homogeneous map germs with degrees $d_1,d_2,d_3$.
\end{theorem}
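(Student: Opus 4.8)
The plan is to deduce Theorem \ref{Thm:main1} directly from the geometric criterion in Theorem \ref{th_geomcrit} together with Lemma \ref{lem_genhom} for the positive direction, and from Remark \ref{rem_notcoprime} for the negative direction. For the positive direction, suppose $\gcd(d_i,d_j)\leq 2$ for all $i<j$ and $\gcd(d_1,d_2,d_3)=1$. Take the non-empty Zariski open set $U\subset H(d_1,d_2,d_3)$ produced by Lemma \ref{lem_genhom}, and fix $F\in U$. I must verify the two conditions of Theorem \ref{th_geomcrit}. Condition (1), that $F^{-1}(0)=\{0\}$, is exactly assertion (1) of the Lemma. Condition (2), that $F_{|U\setminus\{0\}}$ is stable as a map to $V\setminus\{0\}$, is where the bulk of the argument lies.

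For condition (2), the strategy is to invoke Mather's characterization: a map germ between complex manifolds of dimension $3$ (we are in the nice-dimension range $(3,3)$) is stable if and only if it is multi-transverse to the stable mono-germs, which in this dimension are folds, cusps, and swallowtails together with their multi-germ combinations $A_1^2$, $A_1^3$, $A_2A_1$. Concretely, I would show that at each point $q\in V\setminus\{0\}$ the multi-germ of $F$ along $F^{-1}(q)\cap C(F)$ is stable. Assertion (5) of the Lemma guarantees that every mono-singularity of $F$ off the origin is a fold or cusp, so no swallowtails or corank-$2$ points occur; assertions (3) and (4) bound the number of critical preimages of any discriminant point by $2$; assertion (6) ensures a cusp point has no other critical point in its fiber (ruling out the non-stable multi-germs involving a cusp); and assertion (7) provides the normal-crossing (transversality) condition for the two-branch multi-germs $A_1^2$. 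Thus every multi-germ that can occur is one of the stable list, and the required multi-transversality holds. The injectivity-on-rays statement (2) of the Lemma is what lets me treat the homogeneous map as giving a well-defined stable map on the complement of the origin, rather than having hidden identifications along rays.

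The main obstacle I anticipate is the bookkeeping needed to certify that the conditions (1)--(7) of Lemma \ref{lem_genhom} together are \emph{exactly} the list of conditions defining multi-stability in the source Theorem \ref{th_geomcrit}, with no gap. In particular I must confirm that the only stable multi-germs in dimension $3$ with at most two critical branches are folds, cusps, $A_1^2$ (two transverse fold sheets) and, when three branches are allowed, $A_1^3$ and $A_2A_1$; and that assertion (4) plus the finiteness in (3) correctly caps the multiplicity so that the three-fold configurations are themselves in general position. In the non-pairwise-coprime case (where one $d_i$ is odd and two are even) I must also check that the $2:1$ behaviour on the finitely many rays in $C(F)\cap V(f_i)$ does not destroy stability; this is handled because those rays are accounted for by the $X_4^-$, $X_6$, and $B$-matrix analyses in the Lemma, which show the resulting identifications still produce normal crossings.

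For the negative direction, the argument is a clean contrapositive. If $\gcd(d_i,d_j)>2$ for some pair or $\gcd(d_1,d_2,d_3)>1$, then by Remark \ref{rem_notcoprime} no mapping $F\in H(d_1,d_2,d_3)$ can satisfy condition (2) of Lemma \ref{lem_genhom}: either there is a positive-dimensional family of rays on which $F$ is $d:1$ with $d>2$, or $F$ fails to be generically one-to-one on $C(F)$. In either case $F_{|C(F)\setminus\{0\}}$ is not finite-to-one with the fibre bound required for stability, so $F_{|U\setminus\{0\}}$ is not stable for \emph{any} representative, and Theorem \ref{th_geomcrit} then forces $(F,0)$ to fail $\mathcal{A}$-finite determinacy. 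Since this holds for every $F$ in $H(d_1,d_2,d_3)$, there are no $\mathcal{A}$-finitely determined homogeneous germs with these degrees, as claimed.
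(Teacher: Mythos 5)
Your proposal is correct and follows essentially the same route as the paper: Lemma \ref{lem_genhom} supplies the open set and the local geometric conditions guaranteeing that off the origin only stable mono- and multi-germs occur, Theorem \ref{th_geomcrit} converts stability of $F$ on $\C^3\setminus\{0\}$ into $\mathcal{A}$-finite determinacy, and Remark \ref{rem_notcoprime} gives the negative direction. The only ingredient the paper makes explicit that you leave implicit is properness of $F:\C^3\setminus\{0\}\to\C^3\setminus\{0\}$ (immediate from homogeneity and $F^{-1}(0)=\{0\}$), which is what licenses the passage from local stability (your multi-transversality conditions) to genuine stability via Mather's theorem.
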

\begin{proof}
By Lemma \ref{lem_genhom} any $F\in U$ is locally stable, it is also proper, since $F$ is homogeneous and $F^{-1}(0)=0$. Thus by  \cite{mathII} $F: \C^3\setminus\{0\}\to \C^3\setminus\{0\}$ is stable. By Theorem \ref{th_geomcrit} $(F,0)$ is $\mathcal{A}$-finitely determined.

The last statement follows from  Remark \ref{rem_notcoprime}. 
\end{proof}

\section{Counting singularities}

Mappings from $\C^3$ to $\C^3$ have three types of stable discrete mono- or multi-singularities:
\begin{itemize}
\item $A_3$ -- the swallowtail: $(x,y,z)\mapsto (x,y,z^4+y^2z+xz)$
\item $A_2A_1$ -- intersection of cusp edge and fold surface: $\begin{cases} (x_1,y_1,z_1)\mapsto (x_1,y_1,z_1^3+y_1z_1)
\\ (x_2,y_2,z_2)\mapsto (x_2^2,y_2,z_2)\end{cases}$
\item $A_1^3$ -- triple self-intersection of fold surface: $\begin{cases} (x_1,y_1,z_1)\mapsto (x_1,y_1,z_1^2)
\\ (x_2,y_2,z_2)\mapsto (x_2,y_2^2,z_2)\\(x_3,y_3,z_3)\mapsto (x_3^2,y_3,z_3)\end{cases}$
\end{itemize}

Let us denote $s_1=(d_1+d_2+d_3-3)$, $s_2=(d_1-1)(d_2-1)+(d_1-1)(d_3-1)+(d_2-1)(d_3-1)$, $s_3=(d_1-1)(d_2-1)(d_3-1)$ and $P=d_1d_2d_3$. Furthermore let $c_1=s_1$, $c_2=s_2-s_1$ and $c_3=s_3-2s_2+s_1$. Finally let $\# A_2=c_1^2+c_2$ and $\# A_1^2=(P-2)s_1^2-2\# A_2$. The definitions of $c_1,c_2,c_3$ and $\# A_2$ and $\# (A_1)^2$ have a deeper meaning, the former are related to certain quotient Chern classes, the latter to Thom polynomials. We refer the reader to a paper by Ohmoto \cite{ohm} for the details.

We have the following Theorem:

\begin{theorem}\label{Thm:main2}
If $\gcd(d_i,d_j)\le 2$ for $i\not=j$ and $\gcd(d_1,d_2,d_3)=1$, then there is a non-empty Zariski open subset $U_1\subset \Omega_3(d_1,d_2,d_3)$ such that for every mapping $F\in U$ we have:
\begin{itemize}
\item $F$ is stable, in particular the discrete mono- or multi-singularities are of type $A_3$, $A_2A_1$ or $A_1^3$,
\item $F$ has precisely $\# A_3=c_1^3+3c_1c_2+2c_3$ singularities of type $A_3$,
\item $F$ has precisely $\# A_2A_1=(P-3)s_1\# A_2-3\# A_3$ singularities of type $A_2A_1$,
\item $F$ has precisely $\displaystyle \frac{1}{6}\left[(P^2-3P+2)s_1^3-6\# A_2A_1-6\# A_3-3s_1\# A_1^2-4s_1\# A_2 \right]$ singularities of type $A_1^3$.
\end{itemize}
\end{theorem}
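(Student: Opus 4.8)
The plan is to establish the counting formulas by combining the finite determinacy result of Theorem \ref{Thm:main1} with Ohmoto's local Thom polynomial theory for multi-singularities, reducing each count to an intersection-theoretic computation on $\C^3$. For a generic $F\in\Omega(d_1,d_2,d_3)$ one passes to its leading homogeneous part $F_0=(\overline{f}_1,\overline{f}_2,\overline{f}_3)$, which by Theorem \ref{Thm:main1} defines an $\mathcal{A}$-finitely determined germ at the origin; the deformation $F_t(x)=(t^{d_1}f_1(x/t),\dots)$ contracts all discrete (multi-)singularities of $F$ onto $0$ as $t\to 0$, so the total number of $A_3$, $A_2A_1$, and $A_1^3$ points of the stable $F$ equals the local multiplicity contributed by the germ $F_0$ at the origin. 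Each such multiplicity is then computed as the degree of the corresponding Thom polynomial evaluated on the appropriate Chern/Segre data of the germ.

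Concretely, I would first identify the Chern-class inputs. The numbers $c_1,c_2,c_3$ are the quotient Chern classes associated to the virtual bundle $F^*TN-TM$; for a map of degrees $d_1,d_2,d_3$ these are read off from the elementary symmetric expressions in the $(d_i-1)$, which is exactly the content of the auxiliary quantities $s_1,s_2,s_3$ and the relations $c_1=s_1$, $c_2=s_2-s_1$, $c_3=s_3-2s_2+s_1$. I would record that $\# A_2=c_1^2+c_2$ and $\# A_1^2=(P-2)s_1^2-2\# A_2$ are the degrees of the Thom polynomials for the mono-singularity $A_2$ (the cusp locus) and the self-intersection pair $A_1^2$ (the double-fold curve), where the factor $P=d_1d_2d_3$ enters through the degree of the map on the source, governing the self-intersection contributions of the image.

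Next I would apply Ohmoto's universal formulas for each multi-singularity class in turn. The $A_3$ count is a pure mono-singularity computation: substitute $c_1,c_2,c_3$ into the Thom polynomial $c_1^3+3c_1c_2+2c_3$. For the multi-singularities $A_2A_1$ and $A_1^3$ I would use Ohmoto's formulas expressing these as products/residual intersections of the lower strata, which yields $\# A_2A_1=(P-3)s_1\# A_2-3\# A_3$ (the cusp curve meeting the fold surface, corrected by the triple-count of swallowtails) and the displayed alternating expression for $\# A_1^3$, where the leading term $(P^2-3P+2)s_1^3$ is the raw triple self-intersection and the subtracted terms remove the overcounting along $A_2A_1$, $A_3$, $A_1^2$, and $A_2$ strata, with the $1/6$ accounting for the $S_3$-symmetry of an unordered triple of sheets.

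The main obstacle I expect is justifying that the local Ohmoto formulas apply \emph{exactly}, with no correction from infinity. Because $F_0$ is homogeneous it is never stable as a map of projective spaces (the hyperplane at infinity sits inside the critical locus, as noted in the Introduction), so one cannot simply integrate a Thom polynomial over $\Bbb P^3$; instead one must verify that the global count of multi-singularities of the affine stable map $F$ localizes cleanly at the single point $0$ under the contraction $F_t$, with each stratum contributing its local degree and nothing escaping to infinity. Establishing this conservation-of-number statement, together with checking that the genericity conditions of Lemma \ref{lem_genhom} (injectivity on rays, bounded fiber multiplicity, normal crossings of $\Delta(F)$) guarantee that only the three listed $0$-stable types occur and that they are counted with multiplicity one, is the technical heart of the argument; the remaining substitutions of $c_1,c_2,c_3$ and $P$ into Ohmoto's polynomials are routine.
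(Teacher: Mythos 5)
Your proposal follows essentially the same route as the paper: pass to the leading homogeneous part $F_0$, use its $\mathcal{A}$-finite determinacy and the contracting deformation $F_t$ to localize all discrete multi-singularities at the origin, establish stability of $F$ from local stability (transversality to Thom--Boardman strata) plus properness inherited from $F_0$, and then read off the counts from Ohmoto's local Thom polynomial formulas (the paper cites \cite[Example 5.9]{ohm} for exactly the displayed expressions). The ``conservation of number'' issue you flag as the technical heart is precisely what the paper handles by invoking Ohmoto's local theory for stable perturbations of finitely determined germs, so your outline matches the published argument.
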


\begin{proof}
For $F=(f_1,f_2,f_3)\in\Omega_3(d_1,d_2,d_3)$ we denote by $\overline{f}_i$ the homogeneous part of $f_i$ of degree $d_i$ and set $F_0=(\overline{f}_1,\overline{f}_2,\overline{f}_3)$. By \cite[Theorem 2.7]{fjr} there is a Zariski open set $V\subset \Omega_3(d_1,d_2,d_3)$ such that for every $F\in V$ is transversal to the Thom-Boardman strata. This determines the types of singularities that $F$ may have: $A_1$, $A_2$ and $A_1^2$ are the non-discrete types and $A_3$, $A_2A_1$ and $A_1^3$ are the discrete types. In particular $F$ is locally stable. We let $U_1=\{F\in V:\ F_0\in U\}$, where $U$ is the open set from Lemma \ref{lem_genhom}. If $F\in U_1$ then $F_0$ is proper, so $F$ is also proper. Since $F$ locally stable and proper, it is also stable. Let $F_t(x,y,z)=(t^{d_1}f_1,t^{d_2}f_2,t^{d_3}f_3)(t^{-1}x,t^{-1}y,t^{-1}z)$, then $F_t$ is a stable deformation of $F_0$. Obviously for all $t\neq 0$ the mappings $F_t$ have the same number of singularities, furthermore all the singularities tend to zero when $t$ tends to zero. Thus by \cite[Example 5.9]{ohm} $F$ has the numbers of singularities as written above.
\end{proof}

\end{document}